\newtheorem{thm}{Theorem}
\newtheorem{lemma}[thm]{Lemma}
\newtheorem{prop}[thm]{Proposition}
\newtheorem{rem}{Remark}
\newtheorem{note}{Note}
\newtheorem{defn}{Definition}
\newtheorem{cor}{Corollary}
\author{Laurent Poinsot}
\title[Rigidity of the topological dual of spaces of formal series with respect to product topologies]{Rigidity of the topological dual of spaces of formal series with respect to product topologies}
\address{LIPN - UMR 7030, 
CNRS - Universit\'e Paris 13, F-93430 Villetaneuse,
France}
\keywords{Topological fields, topological duals, product topology, infinite matrices}
\begin{document}
\maketitle
\begin{abstract}
\paragraph{Abstract.}
Even in spaces of formal power series is required a topology in order to legitimate some operations, in particular to compute infinite summations. Many topologies can be exploited for different purposes. Combinatorists and algebraists may think to usual order topologies, or the product topology induced by a discrete coefficient field, or some inverse limit topologies. Analysists will take into account the valued field structure of real or complex numbers. As the main result of this paper we prove that the topological dual spaces of formal power series, relative to the class of product topologies with respect to Hausdorff field topologies on the coefficient field, are all the same, namely the space of polynomials. As a consequence, this kind of rigidity forces linear maps, continuous for any (and then for all) of those topologies, to be defined by very particular infinite matrices similar to row-finite matrices.       

\paragraph{R\'esum\'e.}
La validité de certaines opérations, notamment les calculs de sommes de séries, repose sur l'emploi d'une topologie sur l'espace des séries formelles. En Combinatoire ou en Algèbre il est d'usage de considérer les topologies données par une valuation, ou les topologies produits relativement à un corps discret, ou encore des topologies limites projectives. Dès lors que l'Analyse entre en scène il devient inévitable de considérer les valeurs absolues des corps des nombres réels et complexes. Il s'avère en fait que les duals topologiques des espaces de séries, par rapport aux topologies produits relatives aux copies d'un même corps topologique séparé, sont tous isomorphes à l'espace des polynômes. La preuve de ce résultat de rigidité constitue l'apport principal de notre article. L'indépendance du dual topologique relativement au choix parmi ces topologies astreint les opérateurs linéaires, continus pour une (et alors pour toutes les) topologie(s) de notre collection, à apparaître sous la forme de matrices infinies bien particulières qui n'ont qu'un nombre fini d'entrées non nulles sur chaque `` ligne ''. 


\end{abstract}


\section{Introduction}
\label{sec:in}

Manipulation of formal power series requires some topological notions in order to legitimate some computations. For instance, the usual substitution of a power series in one variable without constant term into an other, or the existence of the star operation, related to M\"obius inversion, are usually treated using either an order function (a valuation) or, equivalently (while more imprecise), saying that only finitely many terms contribute to the calculation in each degree. In both cases is used (explicitly or not) a topology given by a filtration (and more precisely a kind of Krull topology): the order of some partial sums must increase indefinitely for the sum to be defined (and the operation to be legal). Quite naturally other topologies may be used: for instance if $X$ is an infinite set, then the completion of the algebra $R\langle X\rangle$ of polynomials in noncommutative variables (where $R$ is a commutative ring with a unit) with respect to the usual filtration (related to the length of a word in the free monoid $X^*$) is the set of all series with only finitely many non-zero terms for each given length. The sum of the alphabet does not even exist in this completion. In order to take such series into account we must used the product topology (with a discrete $R$). If some analytical investigations must be performed (such as convergence ray, differential equation, or functional analysis), the discrete topology of $\mathbb{K}\in\{\reals,\complexes\}$ is no more sufficient; the valued field structure of $\mathbb{K}$ turns to be unavoidable. Other topologies may be used for particular needs.  

Given a topology, compatible in a certain sense with algebraic operations, on a space of formal power series with coefficients in a topological field, it can be useful to consider continuous endomorphisms since they commute to infinite sums. Quite amazingly for a very large class of possible topologies (namely product topologies with respect to separated topologies on the base field) it appears that these continuous linear maps may be seen as infinite matrices of a particular kind (each `` row '' is finitely supported) and that, independently of the topology chosen in the class. In other terms, a linear map can be represented as some `` row-finite '' matrix if, and only if, it is continuous for all the topologies in the given class (see Section~\ref{consequences}). Thus, in order to prove an endomorphism to be continuous for some topology, it suffices to prove this property for the more convenient topology in the class. Similarly, if an endomorphism is given in the form of an infinite matrix (with finitely supported `` rows ''), then it is known to be continuous for every topology in our large collection, and therefore is able to get through infinite sums. At this stage we notice that the class of topologies is that of product topologies relative to every Hausdorff topologies on the coefficient field (an infinite field has the cardinal number of the power set of its power set of distinct field topologies, see Remark~\ref{longremark}). 

The explanation of this phenomenon relies on the fact that the topological dual of a given space of formal series is the same for all the topologies in the big class: the space of polynomials. This result, presented in Section~\ref{thetheorem} and proved in Section~\ref{proof}, is our main theorem (Theorem~\ref{mainstatement}), and we present some of its direct consequences (in particular it explains the `` rigidity '' of the space of continuous endomorphisms with respect to a change of topology in Section~\ref{consequences}). 

In this paper we focus on the linear structure of formal series so that we consider any set $X$ rather than a free monoid $X^*$, and functions rather than formal series.    

\section{Some notations}

Let $R$ be a commutative ring\footnote{In the remainder, the term `` ring '' with no exception refers to `` commutative ring with a unit ''.} with a unit $1_R$. If $M$ and $N$ are two $R$-modules, then $\mathsf{Hom}_R(M,N)$ is the set of all $R$-linear maps from $M$ to $N$. In particular, the \emph{algebraic dual} $M^*$ of $M$ is the $R$-module $\mathsf{Hom}_R(M,R)$. If $X$ is a set, then $R^{(X)}$ is the $R$-module of all finitely supported maps from $X$ to $R$, that is, $f\in R^{(X)}$ if, and only if, the \emph{support} $\mathsf{supp}(f)=\{x\in X\colon f(x)\not=0\}$ of $f$ is a finite set. For every $x_0\in X$, we define the \emph{characteristic function} (or \emph{Dirac mass}) 
\begin{equation}
\begin{array}{llll}
\delta_{x_0}\colon & X & \rightarrow & R\\
&x&\mapsto&\left\{
\begin{array}{ll}
1_R & \mbox{if}\  x=x_0\ ,\\
0_R & \mbox{otherwise}\ .
\end{array}
\right .
\end{array}
\end{equation}
Let us introduce the usual \emph{evaluation map} 
\begin{equation}
\begin{array}{llll}
\mathsf{ev}\colon & R^{(X)}\otimes_R R^X&\rightarrow & R\\
& p\otimes f & \mapsto & \displaystyle\sum_{x\in X}p(x)f(x)\ .
\end{array}
\end{equation} 
As usually it is treated as a $R$-bilinear map $\langle\cdot,\cdot\rangle$, called \emph{dual pairing} (or \emph{canonical bilinear form}, see~\cite{BouAlg} or~\cite{Kothe}), \emph{i.e.}, 
$$\mathsf{ev}(p\otimes f)=\langle p,f\rangle\ .$$
In particular, for every $x\in X$, $\mathsf{ev}(\delta_x\otimes f)=\langle \delta_x,f\rangle=f(x)$, and then $\pi_x\colon f\mapsto\langle\delta_x,f\rangle$ is the \emph{projection} of $R^X$ onto $R\delta_x\cong R$. The dual pairing has the obvious properties of non-degeneracy:
\begin{enumerate}
\item for every $p\in R^{(X)}\setminus\{0\}$, there is some $f\in R^X$ such that $\langle p,f\rangle\not=0$,
\item for every $f\in R^{X}\setminus\{0\}$, there is some $p\in R^{(X)}$ such that $\langle p,f\rangle\not=0$.
\end{enumerate}

Let $E$ be a set, and $\mathcal{F}$ be a collection of maps $\phi\colon E\rightarrow E_{\phi}$, where each $E_{\phi}$ is a topological space. The \emph{initial topology} induced by $\mathcal{F}$ on $E$ is the coarsest topology that makes continuous each $\phi$. This topology is Hausdorff if, and only if, $\mathcal{F}$ \emph{separates} the elements of $E$ (\emph{i.e.}, for every $x\not=y$ in $E$, there exists some $\phi\in \mathcal{F}$ such that $\phi(x)\not=\phi(y)$), and $E_{\phi}$ is separated for each $\phi\in \mathcal{F}$. Now, if $E=F^X$, where $X$ is a set, and $F$ is a topological space, then the \emph{product topology} (or \emph{topology of simple convergence} or \emph{function topology}) on $F^X$ is the initial topology induced by the canonical projections $\pi_x\colon f\mapsto f(x)$, $x\in X$, from $E$ onto $F$. It is characterized by the following property: For every topological space $(Y,\tau_Y)$, the map $f\colon Y\rightarrow F^X$ is continuous if, and only if, every map $\pi_x\circ f\colon Y\rightarrow F$ is continuous (see~\cite{BouTop}). 

Let $R$ be a ring. It is said to be a \emph{topological ring} when it is equipped with a topology $\tau$ (not necessarily Hausdorff) for which ring operations, $x\mapsto -x$, $(x,y)\mapsto x+y$, $(x,y)\mapsto xy$, are continuous when is considered on $R\times R$ the product topology defined by $\tau$ on each factors. A field $\mathbb{K}$ which is also a topological ring is said to be a \emph{topological field} when the map $x\mapsto x^{-1}$ is continuous on $\mathbb{K}^*=\mathbb{K}\setminus\{0\}$ (equipped with the subspace topology). 

If $R$ (resp. $\mathbb{K}$) is a topological ring (resp. topological field),  any $R$-module $M$ (resp. $\mathbb{K}$-vector space) is said to be a \emph{topological $R$-module} (resp. a \emph{topological $\mathbb{K}$-vector space}) if it is equipped with a topology (Hausdorff or not) that makes continuous the module maps $x\mapsto -x$, $(x,y)\mapsto x+y$, $(\lambda, x)\mapsto \lambda x$. Notice that if $R$ (resp. $\mathbb{K}$) is a topological ring (resp. topological field), and $X$ is any set, then $R^X$ (resp. $\mathbb{K}^X$) is a topological $R$-module (resp. topological $\mathbb{K}$-vector space)  -- in the obvious way -- when endowed with the product topology. 

Let $R$ be a topological ring, and $M$ be a topological $R$-module. Then $M^{\prime}$ denotes the \emph{topological dual} of $M$, that is, the sub-module of $M^*$ of continuous linear forms\footnote{Quite obviously, $M^*$ is the topological dual of $M$, when $R$ and $M$ both have the discrete topology.}.  

\paragraph{Summability.}

Many intermediary results of this papers require the notion, and some properties, of a summable family in a topological ring (which is actually hidden but fundamental for the treatment of formal power series). We recall them without any proof; we freely used them in the sequel, and  refer to~\cite{Warner} for further information concerning this concept but also topological rings, fields and modules. 
\begin{defn}
Let $G$ be a Hausdorff Abelian group (that is, an Abelian group -- in additive notation -- with a separated topology such that the group operations, addition and inversion, are continuous), and $(x_i)_{i\in I}$ be a family of elements of $G$. An element $s\in G$ is the \emph{sum} of the \emph{summable family} $(x_i)_{i\in I}$ if, and only if, for each neighborhood $V$ of $s$ there exists a finite subset $J\subseteq I$ such that $\displaystyle\sum_{j\in J}x_j\in V$.  
\end{defn}
The sum $s$ of a summable family $(x_i)_{i\in I}$ of elements of $G$ is usually denoted by $\displaystyle\sum_{i\in I}x_i$.
\begin{prop}
If $(x_i)_{i\in I}$ is a summable family of elements of a Hausdorff Abelian group $G$ having a sum $s$, then for any permutation $\sigma$ of $I$, $s$ is also the sum of the summable family $(x_{\sigma(i)})_{i\in I}$.
\end{prop}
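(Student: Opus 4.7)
The plan is to reduce the claim to a straightforward bookkeeping step. The defining condition of summability refers only to the directed set of finite subsets of $I$ ordered by inclusion, and a permutation $\sigma$ of $I$ induces an order-isomorphism of this directed set via $J\mapsto \sigma^{-1}(J)$, so everything transports cleanly.

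Concretely, I would fix an arbitrary neighborhood $V$ of $s$ and, using summability of $(x_i)_{i\in I}$, produce a finite $J_0\subseteq I$ satisfying the standard Bourbaki form of the definition (of which the excerpt's wording is a shorthand): every finite $K\subseteq I$ with $J_0\subseteq K$ has $\sum_{k\in K}x_k\in V$. Setting $J_0':=\sigma^{-1}(J_0)$, which is finite, I would then take any finite $K'\subseteq I$ containing $J_0'$ and put $K:=\sigma(K')$. Then $K$ is finite and contains $\sigma(J_0')=J_0$, and a change of summation index in the finite Abelian sum yields
\[\sum_{i\in K'}x_{\sigma(i)}\;=\;\sum_{j\in K}x_j\;\in\;V.\]
This exhibits $s$ as a sum of $(x_{\sigma(i)})_{i\in I}$ and, simultaneously, establishes summability of the permuted family.

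The only step worth a second glance is the finite rearrangement $\sum_{i\in K'}x_{\sigma(i)}=\sum_{j\in K}x_j$, which is merely the commutativity of the addition of an Abelian group on a finite index set. There is thus no substantive obstacle; the Hausdorff assumption is not used in the argument itself and enters only, via uniqueness of limits, to justify that talking about \emph{the} sum is meaningful in the first place.
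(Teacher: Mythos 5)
The paper gives no proof of this proposition: it is stated in the ``Summability'' paragraph among facts recalled without proof, with a reference to Warner's book on topological rings. Your argument is correct and is the standard one found in such references: you rightly read the paper's one-line definition as shorthand for the Bourbaki condition (``there is a finite $J_0$ such that every finite $K\supseteq J_0$ gives a partial sum in $V$''), observe that $K'\mapsto \sigma(K')$ carries the finite supersets of $\sigma^{-1}(J_0)$ bijectively onto the finite supersets of $J_0$, and conclude by reindexing the finite sum, which is legitimate by commutativity and associativity of the group law. Your closing remark is also accurate: the Hausdorff hypothesis plays no role in the verification itself and only guarantees uniqueness of the sum.
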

\begin{prop}
If $(x_i)_{i\in I}$ is a summable family of elements of a Hausdorff Abelian group $G$, then for every neighborhood $V$ of zero, $x_i\in V$ for all but finitely many $i\in I$.
\end{prop}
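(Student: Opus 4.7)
The plan is to exploit continuity of the group operations to convert the summability condition into control over individual terms, by comparing two partial sums that differ only by one index. Write $s$ for the sum of $(x_i)_{i\in I}$, and fix an arbitrary neighborhood $V$ of $0$ in $G$.

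First I would use joint continuity of the subtraction map $(a,b)\mapsto a-b$ at the point $(s,s)$, which is sent to $0$, to extract a neighborhood $U$ of $s$ such that $a-b\in V$ whenever $a,b\in U$. Concretely, the preimage of $V$ under subtraction is open and contains $(s,s)$, hence contains some product neighborhood $U_1\times U_2$, and it suffices to take $U:=U_1\cap U_2$.

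Next I would apply the summability of $(x_i)_{i\in I}$ to the neighborhood $U$ of $s$, in its standard Bourbaki form (the form used in~\cite{Warner}): there exists a finite subset $J_0\subseteq I$ such that for every finite $J\subseteq I$ with $J_0\subseteq J$, the partial sum $\sum_{j\in J}x_j$ lies in $U$. Now, for any $i\notin J_0$, applying this with $J=J_0$ and then with $J=J_0\cup\{i\}$ gives two elements of $U$ whose difference is exactly $x_i$; hence $x_i\in U-U\subseteq V$. Since $J_0$ is finite, this is the desired conclusion that $x_i\in V$ for all but finitely many $i\in I$.

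The only real step to watch is the production of $U$ from $V$: it uses joint continuity of subtraction at $(s,s)$, and separate continuity would not be enough. The Hausdorff hypothesis itself does not enter the argument directly; it only serves, in the preceding definition, to make $s$ uniquely determined so that the statement is well posed.
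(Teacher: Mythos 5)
Your proof is correct. Note, however, that the paper itself offers no proof of this proposition: it is one of the facts on summability that the author explicitly recalls from Warner's book ``without any proof,'' so there is no argument in the paper to compare yours against. Your argument is the standard one, and the two steps --- extracting a neighborhood $U$ of $s$ with $U-U\subseteq V$ from joint continuity of subtraction at $(s,s)$, then comparing the partial sums over $J_0$ and $J_0\cup\{i\}$ --- are both sound. One point worth making explicit: your proof necessarily uses the Bourbaki/Warner form of summability (there is a finite $J_0$ such that \emph{every} finite $J\supseteq J_0$ has $\sum_{j\in J}x_j\in U$), whereas the paper's Definition~1 as literally written only asks for the existence of \emph{one} finite $J$ with $\sum_{j\in J}x_j\in V$. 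Under that literal reading the proposition is false (in $\mathbb{R}$, the constant family $x_i=1$ over $I=\mathbb{N}$ would have sum $0$ via $J=\emptyset$), so the stronger form you invoke is clearly what the author intends, and your choice to state it explicitly is the right one. Your closing remarks are also accurate: joint (not merely separate) continuity of subtraction is what produces $U$, and the Hausdorff hypothesis serves only to make the sum well defined.
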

\begin{prop}
Let $G$ be the Cartesian product of a family $(G_{\lambda})_{\lambda\in L}$ of Hausdorff Abelian groups ($G$ has the product topology). Then $s$ is the sum of a family $(x_i)_{i\in I}$ of elements of $G$ if, and only if, $\pi_{\lambda}(s)$ is the sum of 
$(\pi_{\lambda}(x_i))_{i\in I}$ for each $\lambda\in L$ (where $\pi_{\lambda}$ is the canonical component from $G$ onto $G_{\lambda}$). 
\end{prop}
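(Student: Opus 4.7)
The plan is to unwind the definitions of summability and of the product topology, and to exploit two structural facts: each projection $\pi_{\lambda}$ is a continuous group homomorphism, and a basic neighborhood of a point in $G=\prod_{\lambda\in L}G_{\lambda}$ is determined by conditions on only finitely many coordinates. I will treat the two implications separately; each reduces to a straightforward neighborhood-chasing argument once the right basic neighborhood is chosen.

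For the forward direction, assume $s=\sum_{i\in I}x_{i}$ in $G$. Fix $\lambda\in L$ and let $V_{\lambda}$ be a neighborhood of $\pi_{\lambda}(s)$ in $G_{\lambda}$. By continuity of $\pi_{\lambda}$, the preimage $\pi_{\lambda}^{-1}(V_{\lambda})$ is a neighborhood of $s$, so summability of $(x_{i})_{i\in I}$ furnishes a finite $J_{0}\subseteq I$ with $\sum_{j\in J}x_{j}\in \pi_{\lambda}^{-1}(V_{\lambda})$ for all finite $J\supseteq J_{0}$. Since $\pi_{\lambda}$ is an additive homomorphism, $\pi_{\lambda}(\sum_{j\in J}x_{j})=\sum_{j\in J}\pi_{\lambda}(x_{j})\in V_{\lambda}$, which proves that $(\pi_{\lambda}(x_{i}))_{i\in I}$ is summable with sum $\pi_{\lambda}(s)$.

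For the converse, suppose $\pi_{\lambda}(s)=\sum_{i\in I}\pi_{\lambda}(x_{i})$ in $G_{\lambda}$ for every $\lambda$, and let $U$ be a neighborhood of $s$ in $G$. By definition of the product topology, there exist a finite subset $F\subseteq L$ and, for each $\lambda\in F$, a neighborhood $V_{\lambda}$ of $\pi_{\lambda}(s)$ in $G_{\lambda}$, such that $\bigcap_{\lambda\in F}\pi_{\lambda}^{-1}(V_{\lambda})\subseteq U$. For every $\lambda\in F$, summability yields a finite $J_{\lambda}\subseteq I$ controlling $V_{\lambda}$; setting $J_{0}:=\bigcup_{\lambda\in F}J_{\lambda}$, which is finite because $F$ is, one checks that for any finite $J\supseteq J_{0}$ and any $\lambda\in F$, $\pi_{\lambda}(\sum_{j\in J}x_{j})=\sum_{j\in J}\pi_{\lambda}(x_{j})\in V_{\lambda}$, so $\sum_{j\in J}x_{j}\in U$. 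Hence $(x_{i})_{i\in I}$ is summable in $G$ with sum $s$.

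There is no real obstacle here; the argument is essentially tautological once the correct basic neighborhood is selected. The only subtle point is the finiteness of $F$ in the definition of the product topology, which is precisely what allows us to amalgamate the finite index sets $J_{\lambda}$ into a single finite $J_{0}$ that works simultaneously for all coordinates in $F$. Without this finiteness, the backward implication would fail (as one sees for the box topology).
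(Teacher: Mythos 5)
Your proof is correct. The paper states this proposition without proof (it explicitly defers all the summability facts to Warner's book), so there is no in-text argument to compare against; your neighborhood-chasing proof -- continuity plus additivity of $\pi_{\lambda}$ for the forward direction, and the finiteness of the restricting set $F$ in a basic product neighborhood to amalgamate the sets $J_{\lambda}$ for the converse -- is exactly the standard argument one would expect. One minor remark: you correctly work with the net-convergence form of summability (``$\sum_{j\in J}x_j\in V$ for \emph{all} finite $J\supseteq J_0$''), which is stronger than the literal wording of the paper's Definition~1 but is plainly what is intended there, and is what the converse direction genuinely requires.
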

\begin{prop}
If $\phi$ is a continuous homomorphism from a Hausdorff Abelian group $G_1$ to a Hausdorff Abelian group $G_2$, and if $(x_i)_{i\in I}$ is a summable family of elements of $G_1$, then $(\phi(x_i))_{i\in I}$ is summable in $G_2$, and $\displaystyle\sum_{i\in I}\phi(x_i)=\phi\left( \sum_{i\in I}x_i\right )$.
\end{prop}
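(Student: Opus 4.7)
The plan is to verify the summability of $(\phi(x_i))_{i\in I}$ in $G_2$ directly from the definition, using continuity of $\phi$ at the single distinguished point $s=\sum_{i\in I}x_i\in G_1$. The candidate value for the sum in $G_2$ is of course $\phi(s)$, and the whole argument amounts to transporting, via $\phi$, the witnessing finite subset of $I$ provided by the summability of $(x_i)$ in $G_1$.

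Concretely, I would fix an arbitrary neighborhood $W$ of $\phi(s)$ in $G_2$ and, invoking continuity of $\phi$ at $s$, extract a neighborhood $V$ of $s$ in $G_1$ with $\phi(V)\subseteq W$. Summability of $(x_i)_{i\in I}$ towards $s$ then provides a finite $J_0\subseteq I$ such that, for every finite $J$ with $J_0\subseteq J\subseteq I$, the partial sum $\sum_{j\in J}x_j$ lies in $V$. Applying $\phi$ and using that it is a group homomorphism (so that finite sums commute with $\phi$) gives
\[
\sum_{j\in J}\phi(x_j)=\phi\!\left(\sum_{j\in J}x_j\right)\in\phi(V)\subseteq W
\]
for every such finite $J\supseteq J_0$. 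This is precisely the summability of $(\phi(x_i))_{i\in I}$ in $G_2$ with sum $\phi(s)$, and the Hausdorff assumption on $G_2$ secures uniqueness of the sum, so the equality $\sum_{i\in I}\phi(x_i)=\phi(\sum_{i\in I}x_i)$ is unambiguous.

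There is no genuine obstacle: the proof is a one-step translation via continuity plus the homomorphism property. The only subtle point, and the one that deserves care while writing, is the interpretation of the definition of summability as a convergence statement for the directed net of finite partial sums (i.e.\ reading ``there exists a finite $J$ such that $\sum_{j\in J}x_j\in V$'' in the strong sense, with the condition holding for every finite superset of some $J_0$); without that reading, the uniqueness proposition stated just before would itself fail, so this is the intended meaning. Under this reading, continuity applied at $s$ and the homomorphism property applied on finite sets together do all of the work.
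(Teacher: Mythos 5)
Your argument is correct: continuity of $\phi$ at the point $s=\sum_{i\in I}x_i$ together with the fact that $\phi$ commutes with finite sums transports the net convergence of the partial sums from $G_1$ to $G_2$, and the Hausdorff hypothesis on $G_2$ makes the resulting sum unique. The paper itself states this proposition without proof (deferring to the reference on topological rings), and your proof is exactly the standard argument one finds there; your remark that the definition of summability must be read as convergence of the directed net of finite partial sums (i.e.\ the condition holds for every finite superset of some $J_0$), rather than the literal ``there exists a finite $J$'' of the paper's wording, is a legitimate and necessary clarification for the statement to hold.
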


\section{The statement}\label{thetheorem}

The objective of this paper is to prove the following result, and to observe some its consequences. 
\begin{thm}\label{mainstatement}
Let ${\mathbb{K}}$ be an Hausdorff topological field (it might be non-discrete!), and $X$ be any set. Then, the topological dual $({\mathbb{K}}^X)^{\prime}$ of ${\mathbb{K}}^X$, under the product topology, is isomorphic (as a ${\mathbb{K}}$-vector space) to ${\mathbb{K}}^{(X)}$.
\end{thm}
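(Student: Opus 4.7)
The plan is to exhibit a $\mathbb{K}$-linear isomorphism $\Phi\colon\mathbb{K}^{(X)}\to (\mathbb{K}^X)^{\prime}$, $p\mapsto\langle p,\cdot\rangle$. That this map is well-defined and injective is immediate: for each $p\in\mathbb{K}^{(X)}$, $\langle p,\cdot\rangle=\sum_{x\in\mathsf{supp}(p)}p(x)\pi_x$ is a \emph{finite} $\mathbb{K}$-linear combination of projections, hence is continuous for the product topology; injectivity follows from the non-degeneracy of the dual pairing already noted (take $f=\delta_{x_0}$ for $x_0$ in the support of $p$).

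The heart of the matter is therefore surjectivity: given $\phi\in(\mathbb{K}^X)^{\prime}$, I need to build a $p\in\mathbb{K}^{(X)}$ with $\phi=\langle p,\cdot\rangle$. The only reasonable candidate is $p(x)=\phi(\delta_x)$, so the task reduces to proving two things: first, that the set $S=\{x\in X\colon\phi(\delta_x)\ne 0\}$ is \emph{finite}, and second, that $\phi(f)=\sum_{x\in S}\phi(\delta_x)f(x)$ for every $f\in\mathbb{K}^X$.

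For the finiteness of $S$, I would use continuity of $\phi$ at $0$ together with the two distinctive features of the setting: $\mathbb{K}$ is a Hausdorff \emph{field} (so $0$ has some neighborhood $V\subsetneq\mathbb{K}$, since $\{0\}$ is closed and $1\ne 0$), and the topology on $\mathbb{K}^X$ is the \emph{product} topology (whose neighborhoods of $0$ contain basic opens of the form $\{f\colon f(y)\in W_y\ \text{for all}\ y\in F\}$ with $F\subset X$ finite). Continuity gives one such basic open $U\subseteq\phi^{-1}(V)$ with associated finite $F$. For any $x\notin F$ and any scalar $\lambda\in\mathbb{K}$, the function $\lambda\delta_x$ lies in $U$ (it vanishes on $F$), so $\lambda\phi(\delta_x)\in V$ for \emph{every} $\lambda$. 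If $\phi(\delta_x)$ were nonzero, multiplication by it would be a bijection of $\mathbb{K}$, forcing $V=\mathbb{K}$, a contradiction. Hence $\phi(\delta_x)=0$ for all $x\notin F$, so $S\subseteq F$ is finite. This is the step I expect to be the main obstacle, because it is the only place where the field structure, the Hausdorff axiom, and the product topology all enter at once; everything else is essentially formal.

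Once $S$ is known finite, set $p=\sum_{x\in S}\phi(\delta_x)\delta_x\in\mathbb{K}^{(X)}$. To see that $\phi$ coincides with $\langle p,\cdot\rangle$, fix $f\in\mathbb{K}^X$ and note that the family $(f(x)\delta_x)_{x\in X}$ is summable with sum $f$ in $\mathbb{K}^X$: any basic neighborhood of $f$, determined by a finite $F^{\prime}\subseteq X$, already contains every partial sum indexed by a finite $J\supseteq F^{\prime}$. Applying the proposition on continuous homomorphisms of summable families to the continuous linear form $\phi$ yields $\phi(f)=\sum_{x\in X}f(x)\phi(\delta_x)=\sum_{x\in S}f(x)p(x)=\langle p,f\rangle$, the sum being finite. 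This shows $\Phi$ surjective and concludes the proof.
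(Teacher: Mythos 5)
Your proof is correct, and its overall skeleton coincides with the paper's: the same map $\Phi\colon p\mapsto\langle p,\cdot\rangle$, the same easy injectivity and continuity of each $\Phi(p)$, the same candidate $p(x)=\phi(\delta_x)$ for surjectivity, and the same final appeal to the summability of $(f(x)\delta_x)_{x\in X}$ to get $\phi(f)=\sum_x f(x)\phi(\delta_x)$. Where you genuinely diverge is at the step you correctly identify as the crux, the finiteness of $S=\{x\colon\phi(\delta_x)\neq 0\}$. The paper (Lemma~\ref{lem4}) proves this by constructing the auxiliary function $f(x)=\ell(\delta_x)^{-1}$ on $Y_\ell$ and $0$ elsewhere, noting that $(f(x)\ell(\delta_x))_{x\in X}$ is summable, and invoking the general fact that all but finitely many terms of a summable family lie in any neighborhood of $0$; since those terms equal $1_R$ on $Y_\ell$ and some neighborhood of $0$ excludes $1_R$, the set is finite. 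You instead argue directly from continuity of $\phi$ at $0$: a basic product-open $U$ determined by a finite $F$ maps into a proper neighborhood $V$ of $0$, and for $x\notin F$ the whole line $\{\lambda\delta_x\}$ sits inside $U$, so $\mathbb{K}\cdot\phi(\delta_x)\subseteq V\subsetneq\mathbb{K}$ forces $\phi(\delta_x)=0$. Your argument is the more elementary of the two for this step -- it bypasses the summability machinery entirely there and uses only the Hausdorff separation of $0$ from $1$, the shape of basic opens in the product topology, and the algebraic surjectivity of multiplication by a nonzero field element -- whereas the paper's Lemma~\ref{lem4} is phrased for an arbitrary Hausdorff topological ring (finiteness of the set where $\ell(\delta_x)$ is invertible), a marginally more general intermediate statement that is then specialized to fields in Lemma~\ref{lem5}. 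Both arguments use the field hypothesis in an essential and equivalent way, and both are valid.
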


In order to illustrate the scope of this result, let us assume for an instant that $\mathbb{K}$ is a discrete field, and that $V$ is an Hausdorff topological $\mathbb{K}$-vector space. The topology on $V$ is said to be \emph{linear} if, and only if, it has a neighborhood basis of zero consisting of vector subspaces open in the topology.
\begin{prop}[\cite{Lefschetz,Dieudonne}]
Let $V$ be a $\mathbb{K}$-vector space with a linear topology. Then the following conditions are equivalent:
\begin{enumerate}
\item $V$ is complete, and all its open subspaces are of finite codimension.
\item $V$ is an inverse limit of discrete finite-dimensional vector spaces, with the inverse limit topology.
\item $V$ is isomorphic, as a topological vector space, to the algebraic dual $W^{*}$ of a discrete vector space $W$, with the topology of simple convergence. Equivalently, $V$ is isomorphic to $\mathbb{K}^X$, with the product topology, for some set $X$. 
\end{enumerate}
\end{prop}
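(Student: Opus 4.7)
The plan is to establish the cycle $(3) \Rightarrow (2) \Rightarrow (1) \Rightarrow (2) \Rightarrow (3)$. The implication $(3) \Rightarrow (2)$ is essentially formal: pick a $\mathbb{K}$-basis $(e_i)_{i\in I}$ of $W$ and write $W = \varinjlim_F W_F$ as the directed union of its finite-dimensional subspaces $W_F = \operatorname{span}(e_i : i \in F)$ over finite $F \subseteq I$. Dualizing reverses arrows: $W^* \cong \varprojlim_F W_F^*$, and the topology of simple convergence matches the inverse limit topology, with each $W_F^*$ finite-dimensional and inheriting discreteness from $\mathbb{K}$.

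For $(2) \Rightarrow (1)$, realize $V \cong \varprojlim V_\lambda$ as a closed subspace of $\prod_\lambda V_\lambda$; products and closed subspaces of complete Hausdorff groups are complete and Hausdorff, hence so is $V$. A subbasis of neighborhoods of $0$ is given by the kernels $\ker \pi_\lambda$ of the structure projections, and the injection $V/\ker \pi_\lambda \hookrightarrow V_\lambda$ forces these kernels to have finite codimension. Every open subspace in the linear topology contains some finite intersection of such kernels, which still has finite codimension, so every open subspace is of finite codimension.

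The deep direction is $(1) \Rightarrow (2)$. Let $\mathcal{U}$ be the set of open vector subspaces of $V$, directed by reverse inclusion; by hypothesis $\mathcal{U}$ is a neighborhood basis of $0$, and each quotient $V/U$ is finite-dimensional and discrete (since $U$ is clopen in a linear topology). Form the canonical continuous $\mathbb{K}$-linear map $\varphi\colon V \to \varprojlim_{U \in \mathcal{U}} V/U$. Injectivity is Hausdorffness: $\bigcap_{U \in \mathcal{U}} U = \{0\}$. For surjectivity, given a compatible family $(\bar v_U)_U$, pick representatives $v_U \in V$; for $U, U' \subseteq U_0$, compatibility forces $v_U - v_{U_0}, v_{U'} - v_{U_0} \in U_0$, so $(v_U)_{\mathcal{U}}$ is a Cauchy net, which by completeness converges to some $v \in V$ with $\varphi(v) = (\bar v_U)_U$. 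The inverse limit and linear topologies share the same basic opens (cosets $v + U$), so $\varphi$ is a homeomorphism. Finally, $(2) \Rightarrow (3)$ follows by setting $W := \varinjlim V_\lambda^*$, a bare (discrete) $\mathbb{K}$-vector space: level-wise finite-dimensional reflexivity $V_\lambda \cong V_\lambda^{**}$ together with the duality $(\varinjlim V_\lambda^*)^* \cong \varprojlim V_\lambda^{**}$ of limits under algebraic duality (with the topology of simple convergence on the right) yields $V \cong W^*$ as topological vector spaces.

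The main obstacle is the surjectivity step in $(1) \Rightarrow (2)$: it requires handling a Cauchy net indexed by a possibly large directed set and using completeness to extract a coherent limit that realizes all the cosets simultaneously. The remaining steps reduce to standard bookkeeping about direct and inverse limits of discrete finite-dimensional vector spaces, together with the observation that a linear topology always has clopen basic neighborhoods of $0$.
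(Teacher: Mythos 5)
The paper does not prove this proposition at all: it is quoted as a classical result with citations to Lefschetz and Dieudonn\'e, so there is no internal proof to compare against. Judged on its own, your argument is the standard one and is essentially correct. The cycle is well chosen, the completeness and finite-codimension checks in $(2)\Rightarrow(1)$ are right (an open subspace contains a basic neighborhood of $0$, hence a finite intersection of kernels of structure projections, hence has finite codimension), and the Cauchy-net argument for surjectivity in $(1)\Rightarrow(2)$ is the genuinely nontrivial step and you handle it correctly: compatibility gives $v_U-v_{U'}\in U_0$ for $U,U'\subseteq U_0$, completeness gives a limit $v$, and a short two-neighborhood argument shows $v-v_{U_0}\in U_0$ for every $U_0$. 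Two points are thinner than the rest. First, in $(2)\Rightarrow(3)$ the identification of the topology of simple convergence on $\bigl(\varinjlim V_\lambda^*\bigr)^*$ with the inverse limit topology on $\varprojlim V_\lambda^{**}$ is asserted rather than argued; it does hold, but only because the colimit is directed (so any finite set of vectors lives in a single $V_{\lambda_0}^*$) and each $V_\lambda$ is finite-dimensional (so the kernel of the projection to $V_{\lambda_0}^{**}$ is cut out by finitely many evaluation conditions). You should say this, since it is exactly where finite-dimensionality of the levels re-enters. Second, the final clause of $(3)$, namely $W^*\cong\mathbb{K}^X$ with the product topology for $X$ a basis of $W$, is not addressed, though it is a one-line verification. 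With those two remarks added, the proof is complete.
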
 
A topological vector space with the above equivalent properties is called \emph{linearly compact}. This kind of spaces is not so important for this paper, but we obtain a characterization of their topological duals as a minor consequence of our main result: we get that the topological dual of any linearly compact vector space is isomorphic to some $\mathbb{K}^{(X)}$. 

We can also deduce an immediate corollary that is a partial reciprocal of our main result.
\begin{cor}\label{cor1}
Let $\mathbb{K}$ be a topological field, and $X$ be a set. Let us assume that $\mathbb{K}^X$ has the product topology. Suppose that $X$ is infinite. Then,  
$(\mathbb{K}^{X})^{\prime}$ is isomorphic to $\mathbb{K}^{(X)}$ if, and only if, $\mathbb{K}$ is Hausdorff.
\end{cor}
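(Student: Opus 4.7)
The ``if'' direction is exactly Theorem~\ref{mainstatement}. For the converse I would proceed by contraposition: assume $\mathbb{K}$ is not Hausdorff and deduce $(\mathbb{K}^X)^{\prime}\not\cong\mathbb{K}^{(X)}$.

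First I would show that a non-Hausdorff topological field must carry the indiscrete topology. In any topological group $\overline{\{0\}}$ is a closed subgroup; in a topological ring, continuity of $x\mapsto ax$ (which fixes $0$), together with the general inclusion $\phi(\overline{A})\subseteq\overline{\phi(A)}$, forces $\overline{\{0\}}$ to be stable under multiplication by every $a\in\mathbb{K}$, hence to be a closed two-sided ideal. In a field only $\{0\}$ and $\mathbb{K}$ are ideals, and $\overline{\{0\}}\neq\{0\}$ when $\mathbb{K}$ is not Hausdorff, so $\overline{\{0\}}=\mathbb{K}$. Every nonempty open set therefore contains $0$, and by translation invariance the only open sets of $\mathbb{K}$ are $\emptyset$ and $\mathbb{K}$. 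A product of indiscrete spaces is indiscrete, so $\mathbb{K}^X$ is indiscrete as well, every map into it is continuous, and in particular every $\mathbb{K}$-linear form is continuous. Hence $(\mathbb{K}^X)^{\prime}$ coincides with the full algebraic dual $(\mathbb{K}^X)^{*}$.

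It then remains to distinguish $(\mathbb{K}^X)^{*}$ from $\mathbb{K}^{(X)}$ as abstract $\mathbb{K}$-vector spaces, which I would do by a dimension count. One has $\dim_{\mathbb{K}}\mathbb{K}^{(X)}=|X|$, while by the Erd\H{o}s--Kaplansky theorem $\dim_{\mathbb{K}}V^{*}>\dim_{\mathbb{K}}V$ for every infinite-dimensional $\mathbb{K}$-vector space $V$. Applied to $\mathbb{K}^{(X)}$, whose algebraic dual is $\mathbb{K}^X$, this yields $\dim_{\mathbb{K}}\mathbb{K}^X>|X|$; applied again to $\mathbb{K}^X$, it yields $\dim_{\mathbb{K}}(\mathbb{K}^X)^{*}>\dim_{\mathbb{K}}\mathbb{K}^X>|X|=\dim_{\mathbb{K}}\mathbb{K}^{(X)}$, so the two spaces have distinct dimensions and cannot be isomorphic. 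The main hurdle, to my mind, is the first step: one must notice that $\overline{\{0\}}$ is not merely a closed subgroup but a closed \emph{ideal}, which relies crucially on the field structure; once the indiscrete topology has been extracted, the dimension comparison is a black-box application of a classical cardinal-arithmetic result.
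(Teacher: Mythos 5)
Your argument is correct and follows essentially the same route as the paper's proof: the forward direction is Theorem~\ref{mainstatement}, and the converse rests on the dichotomy that a field topology is either Hausdorff or indiscrete, the indiscrete case being excluded because $(\mathbb{K}^X)^{\prime}$ would then coincide with the full algebraic dual $(\mathbb{K}^X)^{*}$, which is not isomorphic to $\mathbb{K}^{(X)}$ for infinite $X$. The only difference is one of detail: the paper cites Warner for the dichotomy and merely asserts the non-isomorphism, whereas you prove both points yourself (the closed-ideal argument showing $\overline{\{0\}}=\mathbb{K}$, and the Erd\H{o}s--Kaplansky dimension count), making your version self-contained.
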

\begin{proof}
If $\mathbb{K}$ is Hausdorff, then according to Theorem~\ref{mainstatement} $(\mathbb{K}^{X})^{\prime}$ is isomorphic to $\mathbb{K}^{(X)}$. Now, let $R$ be an indiscrete topological ring. Then, with respect to the trivial topology on $R$, $(R^{X})^{\prime}=(R^{X})^{*}$. Because $X$ is infinite, the field $\mathbb{K}$ cannot be indiscrete. But ring topologies on a field (and in particular field topologies) may be either Hausdorff or the indiscrete one (see~\cite{Warner}).   
\end{proof}

\section{The proof of Theorem~\ref{mainstatement}}\label{proof}

\begin{lemma}\label{lem1}
Let $R$ be a commutative ring with unit, and $X$ be a set. 
Let us define 
\begin{equation}
\begin{array}{llll}
\Phi\colon& R^{(X)} & \rightarrow & R^{R^X}\\
& p & \mapsto & \left ( 
\begin{array}{llll}
\Phi(p)\colon&R^X & \rightarrow & R\\
&f & \mapsto & \langle p,f\rangle
\end{array}\right )\ .
\end{array}
\end{equation}
Then, for every $p\in R^{(X)}$, $\Phi(p)\in (R^X)^*$, $\Phi$ is $R$-linear and one-to-one.
\end{lemma}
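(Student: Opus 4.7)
The plan is to check the three claims in turn, all of which reduce to elementary manipulations once the correct finiteness observation is made.

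First I would observe that the sum defining $\Phi(p)(f)=\langle p,f\rangle=\sum_{x\in X}p(x)f(x)$ is actually a finite sum, since $p\in R^{(X)}$ has finite support $\mathsf{supp}(p)$. This makes $\Phi(p)$ a well-defined map from $R^X$ to $R$ (no topology or summability is required at this stage; only the algebraic definition of the evaluation is used). Having this, the $R$-linearity of $\Phi(p)\colon R^X\to R$ is immediate: for $f,g\in R^X$ and $\lambda\in R$ one computes
\begin{equation}
\Phi(p)(\lambda f+g)=\sum_{x\in\mathsf{supp}(p)}p(x)(\lambda f(x)+g(x))=\lambda\Phi(p)(f)+\Phi(p)(g),
\end{equation}
using distributivity and commutativity in $R$. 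Thus $\Phi(p)\in(R^X)^*$.

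Next I would verify that $\Phi$ itself is $R$-linear. For $p,q\in R^{(X)}$ and $\lambda\in R$, and for any $f\in R^X$, both sums below have support contained in the finite set $\mathsf{supp}(p)\cup\mathsf{supp}(q)$, so one can freely write
\begin{equation}
\Phi(\lambda p+q)(f)=\sum_{x\in X}(\lambda p(x)+q(x))f(x)=\lambda\sum_{x\in X}p(x)f(x)+\sum_{x\in X}q(x)f(x)=(\lambda\Phi(p)+\Phi(q))(f).
\end{equation}
Since $f$ is arbitrary, this gives $\Phi(\lambda p+q)=\lambda\Phi(p)+\Phi(q)$.

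Finally, for injectivity, I would test $\Phi(p)$ against the Dirac masses $\delta_{x_0}$, $x_0\in X$, which is exactly the first non-degeneracy property of the dual pairing recalled in the introductory material. Indeed, $\Phi(p)(\delta_{x_0})=\langle p,\delta_{x_0}\rangle=p(x_0)$ for every $x_0\in X$, so $\Phi(p)=0$ forces $p(x_0)=0$ for all $x_0\in X$, hence $p=0$. Therefore $\ker\Phi=\{0\}$ and $\Phi$ is one-to-one.

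There is no genuine obstacle here: the whole lemma is a finiteness-plus-linearity statement, and the only point that deserves emphasis is that the sum $\sum_{x\in X}p(x)f(x)$ is actually a finite sum, so no notion of summability or of topology on $R$ is needed for this preliminary step. The later, more delicate statements (showing that $\Phi$ surjects onto $(\mathbb{K}^X)'$ when $\mathbb{K}$ is a Hausdorff topological field) will be where the real work lies.
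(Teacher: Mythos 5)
Your proof is correct and follows essentially the same route as the paper: the paper declares well-definedness and linearity obvious, and proves injectivity exactly as you do, by evaluating $\Phi(p)$ on the Dirac masses $\delta_{x}$ to recover $p(x)$. You have simply written out the finiteness and linearity verifications that the paper leaves implicit.
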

\begin{proof}
The first and second properties are obvious. Let $p\in R^{(X)}$ such that $\Phi(p)=0$, then for every $f\in R^{X}$, 
$\Phi(p)(f)=0$, and in particular, for every $x\in X$, $0=\Phi(p)(\delta_x)=\langle p,\delta_x\rangle=p(x)$, in such a way that $p=0$. 
\end{proof}
\begin{lemma}\label{lem2}
Assume that $R$ is a topological ring (Hausdorff or not) and that $R^X$ has the product topology. Then for every $p\in R^{(X)}$, $\Phi(p)$ is continuous, \emph{i.e.}, $\Phi(p)\in (R^X)^{\prime}$. 
\end{lemma}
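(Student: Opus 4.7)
The plan is to exploit two facts: (a) $p \in R^{(X)}$ has finite support, so $\Phi(p)(f) = \sum_{x \in X} p(x) f(x)$ reduces to a finite sum, and (b) in the product topology on $R^X$, the coordinate projections $\pi_x\colon f \mapsto f(x)$ are continuous by construction, and $R$ is a topological ring.

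First I would set $S := \mathsf{supp}(p)$, which is finite by hypothesis. Then for any $f \in R^X$ we have
\begin{equation}
\Phi(p)(f) = \langle p, f\rangle = \sum_{x \in S} p(x)\, f(x),
\end{equation}
an honest finite sum in $R$ (no summability argument is needed here because $p(x) = 0$ off $S$). Thus $\Phi(p)$ is expressible as a finite $R$-linear combination of the projections $\pi_x$, $x \in S$.

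Next I would argue continuity term by term. For each fixed $x \in S$, the projection $\pi_x \colon R^X \to R$ is continuous by the defining universal property of the product topology recalled in the text. Since $R$ is a topological ring, scalar multiplication by the fixed element $p(x)$, namely $r \mapsto p(x)\, r$, is continuous from $R$ to $R$ (being the composite of $r \mapsto (p(x), r)$ with the continuous multiplication $R \times R \to R$). Hence $f \mapsto p(x)\pi_x(f)$ is continuous as a composite of continuous maps.

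Finally, $\Phi(p) = \sum_{x \in S} p(x)\,\pi_x$ is a finite sum of continuous $R$-valued maps, and because addition in the topological ring $R$ is continuous, finite sums of continuous maps into $R$ are continuous. This yields $\Phi(p) \in (R^X)'$ as required. No step should be genuinely hard; the only subtlety worth stressing in the writeup is that one must first reduce the a priori infinite formal sum defining $\langle p, f\rangle$ to a finite sum, so that neither summability in $R$ nor the Hausdorff hypothesis plays any role in this direction.
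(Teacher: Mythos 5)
Your proposal is correct and is exactly the paper's argument, merely spelled out in full: the paper's one-line proof says precisely that $\Phi(p)$ is a finite sum of scalar multiples of projections, which you justify term by term using continuity of the projections, of multiplication by a fixed element, and of addition in the topological ring $R$. Your closing observation that neither summability nor the Hausdorff hypothesis is needed here is accurate and consistent with the lemma's stated hypotheses.
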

\begin{proof}
It is clear since $\Phi(p)$ is a finite sum of (scalar multiples) of projections. 
\end{proof}
\begin{lemma}\label{lem3}
Let us suppose that $R$ is an Hausdorff topological ring, $X$ is a set, and that $R^X$ has the product topology. For every $f\in R^X$, the family $(f(x)\delta_x)_{x\in X}$ is summable with sum $f$.
\end{lemma}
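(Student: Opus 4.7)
The plan is to reduce the claim to a coordinate-wise statement and exploit the proposition on summability in products of Hausdorff Abelian groups recalled just before Section~\ref{thetheorem}. Since $R$ is Hausdorff, so is the additive group $R^X$ under the product topology, so summability is well-defined there.

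First I would invoke the proposition which states that a family $(u_i)_{i \in I}$ in the product $\prod_{\lambda \in L} G_{\lambda}$ is summable with sum $s$ if, and only if, for every $\lambda \in L$, the family $(\pi_{\lambda}(u_i))_{i \in I}$ is summable in $G_{\lambda}$ with sum $\pi_{\lambda}(s)$. Applied to $G = R^X$ viewed as $\prod_{y \in X} R$, proving that $(f(x)\delta_x)_{x \in X}$ is summable with sum $f$ reduces to checking that for each fixed $y \in X$, the family $(\pi_y(f(x)\delta_x))_{x \in X}$ is summable in $R$ with sum $\pi_y(f) = f(y)$.

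Next I would compute the projection: $\pi_y(f(x)\delta_x) = f(x)\delta_x(y)$, which equals $f(y)$ when $x = y$ and $0_R$ otherwise. Thus the family $(\pi_y(f(x)\delta_x))_{x \in X}$ has at most one nonzero term, namely the one indexed by $x = y$. Such an almost-everywhere-zero family is trivially summable in any Hausdorff Abelian group, with sum equal to its unique nonzero value: given any neighborhood $V$ of $f(y)$, the finite set $J_0 = \{y\}$ witnesses the defining property, since for every finite $J \supseteq J_0$ the partial sum $\sum_{x \in J} f(x)\delta_x(y) = f(y) \in V$.

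Combining these two facts, for every $y \in X$ the $y$-th projection of the would-be sum is $f(y) = \pi_y(f)$, so the product-topology criterion yields that $(f(x)\delta_x)_{x \in X}$ is summable in $R^X$ with sum $f$. There is no real obstacle here beyond correctly identifying the coordinate projections of the Dirac masses; once this is done the result follows immediately from the summability toolbox already recalled in the paper.
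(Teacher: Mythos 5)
Your proof is correct and follows essentially the same route as the paper: reduce to coordinate-wise summability via the recalled proposition on sums in products of Hausdorff Abelian groups, then observe that each projected family $(f(x)\delta_x(y))_{x\in X}$ has at most one nonzero term. The paper states the coordinate-wise step as ``immediate''; you merely spell out the witness $J_0=\{y\}$, which is a harmless elaboration.
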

\begin{proof}
It is sufficient to prove that for every $x_0\in X$ the family $$(\langle\delta_{x_0},f(x)\delta_x\rangle)_{x\in X}=(f(x)\delta_x(x_0))_{x\in X}$$ is summable in $R$ with sum $\langle \delta_{x_0},f\rangle=f(x_0)$ which is immediate.
\end{proof}
\begin{lemma}\label{lem4}
Under the same assumptions as Lemma~\ref{lem3}, if $\ell\in (R^{X})^{\prime}$, then $$Y_{\ell}=\{x\in X\colon \ell(\delta_x)\ \mbox{is invertible in}\ R\}$$ is finite. 
\end{lemma}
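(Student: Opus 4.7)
The plan is to assume for contradiction that $Y_{\ell}$ is infinite, and to construct an $f\in R^X$ for which Lemma~\ref{lem3} combined with continuity of $\ell$ forces a summable family in $R$ containing infinitely many copies of $1_R$ — contradicting the Hausdorff assumption.

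First I would dispose of the trivial case: if $R=\{0\}$ there are no invertibles and $Y_{\ell}=\emptyset$. So I may assume $1_R\neq 0_R$, and then, since $R$ is Hausdorff, pick an open neighborhood $V$ of $0_R$ with $1_R\notin V$. This is the fixed neighborhood against which the contradiction will be staged.

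The main construction: if $Y_{\ell}$ is infinite, define $f\in R^X$ by $f(x)=\ell(\delta_x)^{-1}$ for $x\in Y_{\ell}$ and $f(x)=0$ otherwise (this is legal precisely because $\ell(\delta_x)$ is invertible on $Y_{\ell}$). By Lemma~\ref{lem3}, the family $(f(x)\delta_x)_{x\in X}$ is summable in $R^X$ with sum $f$. Since $\ell\in(R^X)^{\prime}$ is a continuous additive homomorphism, the proposition on continuous homomorphisms preserving sums (the last proposition of the ``Summability'' paragraph) gives that $(\ell(f(x)\delta_x))_{x\in X}=(f(x)\ell(\delta_x))_{x\in X}$ is summable in $R$, with sum $\ell(f)$.

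Now I exploit the construction of $f$: for every $x\in Y_{\ell}$ we have $f(x)\ell(\delta_x)=\ell(\delta_x)^{-1}\ell(\delta_x)=1_R$. Thus the summable family contains infinitely many terms equal to $1_R$. But a summable family in a Hausdorff Abelian group has all but finitely many terms in any prescribed neighborhood of $0$ (the second proposition of the ``Summability'' paragraph); applying this to $V$ yields the contradiction $1_R\in V$. Hence $Y_{\ell}$ must be finite. There is no real obstacle here — the only subtlety is recognizing that invertibility of $\ell(\delta_x)$ is exactly what is needed to manufacture infinitely many $1_R$'s inside a family that continuity forces to be summable.
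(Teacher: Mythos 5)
Your proof is correct and follows essentially the same route as the paper's: the same auxiliary function $f$ (equal to $\ell(\delta_x)^{-1}$ on $Y_{\ell}$ and $0$ elsewhere), the same use of Lemma~\ref{lem3} plus continuity of $\ell$ to make $(f(x)\ell(\delta_x))_{x\in X}$ summable, and the same contradiction with a neighborhood of $0_R$ excluding $1_R$. The only addition is your explicit (and harmless) disposal of the trivial ring, which the paper leaves implicit.
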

\begin{proof}
Since $\ell$ is continuous (and linear), and $(f(x)\delta_x)_{x\in X}$ is summable with sum $f$, then $(f(x)\ell(\delta_x))_{x\in X}$ is summable in $R$ with sum $\ell(f)$ for every $f\in R^X$. Let us define 
\begin{equation}
\begin{array}{llll}
f\colon & X&\rightarrow & R\\
& x&\mapsto & \left \{ 
\begin{array}{ll}
\ell(\delta_x)^{-1} & \mbox{if}\ x\in Y_{\ell}\ ,\\
0_R& \mbox{otherwise}\ .
\end{array}\right .
\end{array}
\end{equation}
Then, $(f(x)\ell(\delta_x))_{x\in X}$ is summable with sum $\ell(f)$ in $R$. According to properties of summability, for every neighborhood $U$ of $0_R$ in $R$, $f(x)\ell(\delta_x)\in U$ for all but finitely many $x\in X$. Since $R$ is assumed Hausdorff, there is some neighborhood $U$ of $0_R$ such that $1_R\not\in U$. If $Y$ is not finite, then $1_R=f(x)\ell(\delta_x)\not\in U$ for every $x\in Y$ which is a contradiction.
\end{proof}
\begin{lemma}\label{lem5}
Let ${\mathbb{K}}$ be an Hausdorff topological field, $X$ be a set, and assume that ${\mathbb{K}}^X$ is equipped with the product topology. Let $\ell\in ({\mathbb{K}}^X)^*$. If $\ell\in ({\mathbb{K}}^X)^{\prime}$, then $\ell(\delta_x)=0$ for all but finitely many $x\in X$. 
\end{lemma}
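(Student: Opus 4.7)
The statement is essentially an immediate specialization of Lemma~\ref{lem4} to the case $R=\mathbb{K}$. First I would note that a Hausdorff topological field is in particular a Hausdorff topological ring, so the hypotheses of Lemma~\ref{lem4} are satisfied verbatim: for any $\ell\in(\mathbb{K}^X)^{\prime}$, the set
\[
Y_{\ell}=\{x\in X\colon \ell(\delta_x)\ \text{is invertible in}\ \mathbb{K}\}
\]
is finite.

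The only additional (purely algebraic) step is to observe that in a field invertibility and non-nullity coincide: $a\in\mathbb{K}$ is invertible if and only if $a\neq 0$. Consequently $Y_{\ell}$ equals $\{x\in X\colon \ell(\delta_x)\neq 0\}$, which is therefore finite, and this is precisely the conclusion sought.

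I do not expect any real obstacle here, since Lemma~\ref{lem4} has already carried out all the genuine topological work: the summability of $(f(x)\delta_x)_{x\in X}$ from Lemma~\ref{lem3}, its transport through the continuous linear form $\ell$, and the Hausdorff-based construction of a neighborhood of $0_{\mathbb{K}}$ avoiding $1_{\mathbb{K}}$ used to derive a contradiction. Strengthening ``$R$ a ring'' to ``$\mathbb{K}$ a field'' upgrades exactly the conclusion we want, from finiteness of the set where $\ell(\delta_x)$ is a unit to finiteness of its support.
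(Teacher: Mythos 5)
Your proof is correct and is exactly the paper's argument: apply Lemma~\ref{lem4} with $R=\mathbb{K}$ and note that in a field an element is invertible if and only if it is nonzero, so $Y_{\ell}=\{x\in X\colon \ell(\delta_x)\not=0\}$ is finite. No differences to report.
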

\begin{proof}
According to Lemma~\ref{lem4}, the set $\{x\in X\colon \ell(\delta_x)\ \mbox{is invertible in}\ {\mathbb{K}}\}=\{x\in X\colon \ell(\delta_x)\not=0\}$ is finite. 
\end{proof}
\begin{lemma}\label{lem6}
Under the same assumptions as Lemma~\ref{lem5}, $\Phi$ is onto. 
\end{lemma}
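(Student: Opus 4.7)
The plan is to exhibit an explicit preimage. Given any $\ell\in(\mathbb{K}^X)'$, I would define $p\colon X\to\mathbb{K}$ by $p(x)=\ell(\delta_x)$. By Lemma~\ref{lem5}, $p$ has finite support, so $p\in\mathbb{K}^{(X)}$, and this is forced to be the only possible candidate since if $\Phi(p)=\ell$ then evaluating at $\delta_x$ gives $p(x)=\langle p,\delta_x\rangle=\ell(\delta_x)$.

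The remaining task is to verify $\Phi(p)=\ell$, i.e.\ that $\ell(f)=\sum_{x\in X}f(x)\ell(\delta_x)$ for every $f\in\mathbb{K}^X$. Here I would chain together the summability results from the preliminaries. First, Lemma~\ref{lem3} tells us that the family $(f(x)\delta_x)_{x\in X}$ is summable in $\mathbb{K}^X$ with sum $f$. Next, since $\ell$ is a continuous group homomorphism from $\mathbb{K}^X$ into the Hausdorff group $\mathbb{K}$, the proposition on continuous images of summable families gives
\begin{equation*}
\ell(f)=\ell\Bigl(\sum_{x\in X}f(x)\delta_x\Bigr)=\sum_{x\in X}\ell(f(x)\delta_x)=\sum_{x\in X}f(x)\ell(\delta_x).
\end{equation*}
By Lemma~\ref{lem5}, all but finitely many terms in the right-hand sum vanish, so the summable family reduces to a finite sum, which by construction of $p$ equals $\langle p,f\rangle=\Phi(p)(f)$.

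I do not anticipate a serious obstacle: the whole argument is a direct application of Lemma~\ref{lem3} (to transfer $f$ into a sum of Dirac masses), continuity of $\ell$ (to commute the sum with $\ell$), and Lemma~\ref{lem5} (to collapse the infinite sum to a finite one so the identification with $\Phi(p)$ is literal). The only subtle point to keep straight is that ``$\sum_{x\in X}$'' first means the topological sum of a summable family in $\mathbb{K}$, and only after invoking Lemma~\ref{lem5} becomes an honest finite sum matching the definition of $\Phi$; but since this collapse requires no further hypothesis, the verification is immediate.
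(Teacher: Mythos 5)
Your proposal is correct and follows essentially the same route as the paper: the same candidate $p(x)=\ell(\delta_x)$, finiteness of support via Lemma~\ref{lem5}, and the identity $\ell(f)=\sum_{x\in X}f(x)\ell(\delta_x)$ obtained from Lemma~\ref{lem3} together with continuity of $\ell$. Your version merely spells out the summability justification that the paper leaves implicit in its chain of equalities.
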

\begin{proof}
Let $\ell\in ({\mathbb{K}}^X)^{\prime}$. Let us define 
\begin{equation}
\begin{array}{llll}
p_{\ell}\colon & X& \rightarrow & {\mathbb{K}}\\
& x& \mapsto & \ell(\delta_x)\ .
\end{array}
\end{equation}
\emph{A priori} $p_{\ell}\in {\mathbb{K}}^{X}$. But according to Lemma~\ref{lem5}, actually $p_{\ell}\in {\mathbb{K}}^{(X)}$. Let $f\in {\mathbb{K}}^{X}$. We have $$\Phi(p_{\ell})(f)=\langle p_{\ell},f\rangle=\displaystyle\sum_{x\in X}p_{\ell}(x)f(x)=\sum_{x\in X}\ell(\delta_x)f(x)=\ell(f)$$ 
and then $\Phi(p_{\ell})=\ell$. 
\end{proof}

Now it is easy to conclude the proof of Theorem~\ref{mainstatement} since it follows directly from Lemmas~\ref{lem1} and~\ref{lem6}. 
\begin{rem}\label{longremark}
\begin{enumerate}
\item The algebraic dual of ${\mathbb{K}}^X$ may be distinct from ${\mathbb{K}}^{(X)}$. Indeed, let $(e_i)_{i\in I}$ be an algebraic basis of ${\mathbb{K}}^{X}$ (the existence of such a basis requires the axiom of choice for sets $X$ of arbitrary large cardinal number). Therefore, every map $f\in {\mathbb{K}}^{X}$ may be (uniquely) written as a finite linear combination $\displaystyle\sum_{i\in I}f_i e_i$, with $f_i\in {\mathbb{K}}$ for each $i\in I$. Let us consider the map $\ell\colon {\mathbb{K}}^X\rightarrow{\mathbb{K}}$ such that $\ell(f)=\displaystyle\sum_{i\in I}f_i$. Clearly, $\ell$ is a linear form, that is, an element of the algebraic dual $({\mathbb{K}}^X)^*$ of ${\mathbb{K}}^X$. The family $(\delta_x)_{x\in X}$ is linearly independent in ${\mathbb{K}}^X$. Thus we may consider the algebraic basis of ${\mathbb{K}}^X$ that extends $(\delta_x)_{x\in X}$. Now, the corresponding functional $\ell$ has a nonzero value for each $\delta_x$. Therefore, if $X$ is infinite, then $\ell$ does not belong to the image of $\Phi$, or, in other terms, $\ell\not\in ({\mathbb{K}}^X)^{\prime}$. In particular, whenever ${\mathbb{K}}$ is an Hausdorff topological field, ${\mathbb{K}}^X$ has the product topology, and $X$ is infinite, then $\ell$ is  discontinuous at zero (and thus on the whole ${\mathbb{K}}^X$). 
\item A field topology may be either Hausdorff or the indiscrete one. Its seems to remain too little choice. Actually it is known (see~\cite{Podewski,Kiltinen}) that every infinite field $\mathbb{K}$ has $2^{2^{|\mathbb{K}|}}$ non-homeomorphic topologies (where $|X|$ denotes the cardinal number of a set $X$). 
\item Let $R$ be a discrete ring, and $X$ be a set. Let $X^*$ be the free monoid on the alphabet $X$, $\epsilon$ be the empty word, and $|\omega|$ be the length of a word $\omega\in X^*$. Let us define $\mathfrak{M}_{\geq n}=\{f\in R^X\colon \nu(f)\geq n\}$, $n\in\naturals$, where $\nu(f)=\inf\{n\in\naturals\colon \exists \omega\in X^*,\ |\omega|\not=n,\ \mbox{and}\ f(\omega)\not=0\}$ for every non-zero $f\in R^X$ (the infimum being taken on $\naturals\cup\{\infty\}$, with $\infty>n$ for every $n\in \naturals$, then $\nu(0)=\infty$). The set $R^X$, seen as the $R$-algebra $R\langle\langle X\rangle\rangle$ of formal power series in noncommutative variables, may be topologized (as a topological $R$-algebra -- see~\cite{Warner} -- and, therefore, as a topological $R$-module) by the decreasing filtration of ideals $\mathfrak{M}_{\geq n}$: this is an example of the so-called \emph{Krull topology} (see~\cite{Eisenbud}), and it is the usual topology considered for formal power series in combinatorics and algebra; in case $X$ is reduced to an element $x$, we recover the usual $\mathfrak{M}$-adic topology of $\mathbb{K}[[x]]$, where $\mathfrak{M}=\langle x\rangle$ is the principal ideal generated by $x$. Whenever $X$ is finite, this Krull topology coincides\footnote{Notice that if $X$ is infinite both topologies are distinct: for instance, let $(x_n)_{n\geq 0}$ be a sequence of distinct elements of $X$, then this family is easily seen summable in the product topology with $R$ discrete, while it does not converge in the Krull topology.} with the product topology with a discrete $R$. According to Theorem~\ref{mainstatement}, in case where $X$ is finite and $R$ is a topological field $\mathbb{K}$, the topological dual of $\mathbb{K}\langle\langle X\rangle\rangle$ (which is also a linearly compact space) is the space of polynomials $\mathbb{K}\langle X\rangle$ in noncommutative variables.
\item Take any monoid with a zero (see~\cite{Clifford}) with the finite decomposition property (see~\cite{BouAlg,Poinsot}), and let $R$ be a ring. Let us consider the total contracted $R$-algebra  $R_0[[M]]$ of the monoid with zero $M$ (see~\cite{Poinsot}) that consists, as a $R$-module, of $\{f\in R^M\colon f(0_M)=0_R\}$, where $0_M$ is the zero of $M$, while $0_R$ is the zero of the ring $R$. It is clear that $R_0[[M]]\cong R^{M^{*}}$ (as $R$-algebras), with $M^*=M\setminus\{0_M\}$. Now, let us assume that $\mathbb{K}$ is an Hausdorff topological field. The product topology on $\mathbb{K}^M$ induces the product topology on $\mathbb{K}^{M^*}$, that corresponds to that of $\mathbb{K}_0[[M]]$. The topological dual of $\mathbb{K}^{M^*}$ being $\mathbb{K}^{(M^*)}$ it is easy to check that $(\mathbb{K}_0[[M]])^{\prime}$ is isomorphic to the (usual) contracted algebra (see~\cite{Clifford}) $\mathbb{K}_0[M]$ of the monoid with zero $M$.  
\item Theorem~\ref{mainstatement} may be applied for the discrete topology on ${\mathbb{K}}\in\{\reals,\complexes\}$, but also for the usual topologies of $\reals$ and $\complexes$, in such a way that the topological dual spaces $(\reals^{X})^{\prime}$ or $(\complexes^{X})^{\prime}$ for both discrete topology and the topology induced by the (usual) absolute values on $\reals,\complexes$ are identical since isomorphic to $\reals^{(X)}$ or $\complexes^{(X)}$. Notice that $\mathbb{K}^X$ is a Fr\'echet space (see~\cite{Treves}), real or complex depending on whether $\mathbb{K}=\mathbb{R}$ or $\mathbb{K}=\mathbb{C}$, when is considered the product topology relative to the absolute value, and as such allows functional analysis like, for instance, Banach-Steinhaus, open map and closed graph theorems that do not hold in the case of the same space with the product topology relative to a discrete $\mathbb{K}$.   
\end{enumerate}
\end{rem}

\section{Consequence on continuous endomorphisms}\label{consequences}

As explained in the Introduction, the rigidity of the dual space with respect to the change of product topologies forces continuous linear maps (with respect to any of those topologies) to be represented by `` row-finite '' matrices. 

Let ${\mathbb{K}}$ be an Hausdorff topological field, and $X$, $Y$ be two sets. We suppose that ${\mathbb{K}}^{Z}$ has the product topology for $Z\in\{X,Y\}$. The set of all linear maps (resp. continuous linear maps) from ${\mathbb{K}}^X$ to ${\mathbb{K}}^Y$ is denoted by $\mathsf{Hom}_{{\mathbb{K}}}({\mathbb{K}}^X,{\mathbb{K}}^Y)$ (resp. $\mathcal{L}({\mathbb{K}}^X,{\mathbb{K}}^Y)$).  We denote by ${\mathbb{K}}^{Y\times(X)}$ the set of all maps $M\colon Y\times X\rightarrow {\mathbb{K}}$ such that for each $y\in Y$, the set $\{x\in X\colon M(y,x)\not=0\}$ is finite. Recall that if $p\in {\mathbb{K}}^{(X)}$, then its support is given by  
\begin{equation}
\mathsf{supp}(p)=\{x\in X\colon p(x)\not=0\}\ .
\end{equation}

Let $\phi\in\mathcal{L}({\mathbb{K}}^X,{\mathbb{K}}^Y)$. We define the following map:
\begin{equation}
\begin{array}{llll}
M_{\phi}\colon & Y\times X&\rightarrow & {\mathbb{K}}\\
& (y,x) & \mapsto & \langle \delta_y,\phi(\delta_x)\rangle\ .
\end{array}
\end{equation}
\begin{lemma}\label{lemsecondepartie}
For each $\phi\in\mathcal{L}({\mathbb{K}}^X,{\mathbb{K}}^Y)$, $M_{\phi}\in {\mathbb{K}}^{Y\times(X)}$, and the map $\phi\mapsto M_{\phi}$ is into.
\end{lemma}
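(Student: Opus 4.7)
The plan is to reduce each claim to material already established in the proof of Theorem~\ref{mainstatement}, especially Lemma~\ref{lem5} for the row-finiteness and Lemma~\ref{lem3} for the injectivity.

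First I would verify that $M_\phi$ lies in $\mathbb{K}^{Y\times(X)}$, \emph{i.e.}, that for each fixed $y \in Y$ the set $\{x\in X : M_\phi(y,x)\not= 0\}$ is finite. Fix $y\in Y$ and consider the map
\begin{equation*}
\ell_y\colon \mathbb{K}^X \longrightarrow \mathbb{K},\quad f\mapsto \langle\delta_y,\phi(f)\rangle = \pi_y(\phi(f))\ .
\end{equation*}
Since $\phi$ is continuous from $\mathbb{K}^X$ to $\mathbb{K}^Y$ and $\pi_y$ is continuous from $\mathbb{K}^Y$ to $\mathbb{K}$ (by definition of the product topology), the composition $\ell_y$ belongs to $(\mathbb{K}^X)'$. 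Then Lemma~\ref{lem5} gives $\ell_y(\delta_x)=0$ for all but finitely many $x\in X$. Since $\ell_y(\delta_x) = M_\phi(y,x)$, this is exactly the row-finiteness condition.

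Next I would prove that $\phi \mapsto M_\phi$ is into. Suppose $\phi,\psi\in\mathcal{L}(\mathbb{K}^X,\mathbb{K}^Y)$ satisfy $M_\phi = M_\psi$. Then $\langle\delta_y,\phi(\delta_x)\rangle = \langle\delta_y,\psi(\delta_x)\rangle$ for every $y\in Y$ and every $x\in X$, so $\pi_y(\phi(\delta_x))=\pi_y(\psi(\delta_x))$ for all $y$, which forces $\phi(\delta_x)=\psi(\delta_x)$ for every $x\in X$ (the projections collectively separate points of $\mathbb{K}^Y$). To extend this equality from the $\delta_x$'s to an arbitrary $f\in \mathbb{K}^X$, apply Lemma~\ref{lem3}: $f$ is the sum of the summable family $(f(x)\delta_x)_{x\in X}$. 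Since $\phi$ and $\psi$ are continuous and linear, the Proposition on continuous homomorphisms and summable families yields $\phi(f)=\sum_{x\in X} f(x)\phi(\delta_x)$ and similarly for $\psi$, whence $\phi(f)=\psi(f)$ for every $f$.

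Neither step looks like a serious obstacle: the first is a direct invocation of Lemma~\ref{lem5} applied to $\pi_y\circ\phi$, and the second is the standard density-style argument made rigorous by the fact that continuous linear maps preserve summable families. The only point deserving care is ensuring that, when extending the coincidence of $\phi$ and $\psi$ on the $\delta_x$'s to all of $\mathbb{K}^X$, we use continuity in the form of preservation of infinite sums rather than sequential continuity, which would be insufficient for arbitrary (possibly uncountable) $X$.
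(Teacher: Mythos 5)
Your proof is correct and follows essentially the same route as the paper: identify $f\mapsto\langle\delta_y,\phi(f)\rangle$ as an element of $(\mathbb{K}^X)^{\prime}$ for the row-finiteness, and use summability of $(f(x)\delta_x)_{x\in X}$ plus preservation of sums under continuous linear maps for injectivity. The only (harmless, slightly cleaner) difference is that you invoke Lemma~\ref{lem5} directly rather than passing through Theorem~\ref{mainstatement} to produce the polynomial $p_{\phi,y}$ and compute its support, as the paper does.
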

\begin{proof}
For every $x\in X$, the map 
\begin{equation}
\begin{array}{lll}
{\mathbb{K}}^X&\rightarrow & {\mathbb{K}}\\
f&\mapsto & \langle \delta_y,\phi(f)\rangle
\end{array}
\end{equation}
is an element of $({\mathbb{K}}^X)^{\prime}$ (because it is the composition of $\phi$ and the projection onto ${\mathbb{K}}\delta_y$). According to Theorem~\ref{mainstatement}, there is one and only one $p_{\phi,y}\in {\mathbb{K}}^{(X)}$ such that for every $f\in {\mathbb{K}}^X$, 
$\langle p_{\phi,y},f\rangle=\langle \delta_y,\phi(f)\rangle$.  In particular, for every $x\in X$, 
\begin{equation}
\displaystyle\left\{\begin{array}{ll} p_{\phi,y}(x)&\mbox{if}\ x\in \mathsf{supp}(p_{\phi,y})\\
0_R & \mbox{otherwise}\end{array}\right .=\sum_{z\in X}p_{\phi,y}(z)\delta_x(z)=\langle p_{\phi,y},\delta_x\rangle=\langle \delta_y,\phi(\delta_x)\rangle\ .
\end{equation}
Therefore $\{x\in X\mid \langle\delta_y,\phi(\delta_x)\rangle\not=0\}=\mathsf{supp}(p_{\phi,y})$, and then $M_{\phi}\in {\mathbb{K}}^{Y\times(X)}$. 

Suppose that $M_{\phi}=M_{\phi^{\prime}}$, then for every $(y,x)\in Y\times X$, $\langle \delta_y,\phi(\delta_x)\rangle=\langle 
\delta_y,\phi^{\prime}(\delta_x)\rangle$. Then, by bilinearity, $\phi(\delta_x)(y)-\phi^{\prime}(\delta_x)(y)=\langle \delta_y,\phi(\delta_x)-\phi^{\prime}(\delta_x)\rangle=0$. Since this last equality holds for every $y \in Y$, $\phi(\delta_x)=\phi^{\prime}(\delta_x)$ for every $x\in X$. Now, let $f\in {\mathbb{K}}^X$, since $f=\displaystyle\sum_{x\in X}f(x)\delta_x$ (sum of a summable family), by continuity, $\phi(f)=\displaystyle\sum_{x\in X}f(x)\phi(\delta_x)=\sum_{x\in X}f(x)\phi^{\prime}(\delta_x)=\phi^{\prime}(f)$.   
\end{proof}

\begin{thm}\label{infinitematrices}
The sets $\mathcal{L}({\mathbb{K}}^X,{\mathbb{K}}^Y)$ and ${\mathbb{K}}^{Y\times (X)}$ are equipotent. More precisely the map $\phi\mapsto M_{\phi}$ of Lemma~\ref{lemsecondepartie} is onto.
\end{thm}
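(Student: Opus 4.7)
The plan is to construct an explicit inverse to $\phi \mapsto M_\phi$: starting from an arbitrary row-finite matrix $M \in \mathbb{K}^{Y \times (X)}$, I will build a continuous linear map $\phi_M$ such that $M_{\phi_M} = M$, thereby exhibiting the surjectivity asserted in the theorem.

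First I would define, for each $M\in\mathbb{K}^{Y\times(X)}$ and each $f\in\mathbb{K}^X$, a function $\phi_M(f)\in\mathbb{K}^Y$ by
\begin{equation}
\phi_M(f)(y) = \sum_{x\in X} M(y,x)\, f(x)\ .
\end{equation}
This is well defined because, by the very definition of $\mathbb{K}^{Y\times(X)}$, for each fixed $y\in Y$ the set $\{x\in X\colon M(y,x)\not=0\}$ is finite, so the sum has only finitely many non-zero terms and makes sense in $\mathbb{K}$ without recourse to any topology. The $\mathbb{K}$-linearity of $\phi_M\colon\mathbb{K}^X\to\mathbb{K}^Y$ is immediate from the bilinearity of $(M,f)\mapsto\sum_x M(y,x)f(x)$.

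The continuity of $\phi_M$ is handled by the universal property of the product topology on $\mathbb{K}^Y$: $\phi_M$ is continuous if, and only if, $\pi_y\circ\phi_M\colon\mathbb{K}^X\to\mathbb{K}$ is continuous for every $y\in Y$. But for a fixed $y$,
\begin{equation}
(\pi_y\circ\phi_M)(f) = \sum_{x\in \mathsf{supp}(M(y,\cdot))} M(y,x)\,\pi_x(f)\ ,
\end{equation}
which is a \emph{finite} $\mathbb{K}$-linear combination of the canonical projections $\pi_x$. Since each $\pi_x$ is continuous by the definition of the product topology, and the vector space operations of $\mathbb{K}$ are continuous, this composition is continuous, exactly as in Lemma~\ref{lem2}. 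Hence $\phi_M\in\mathcal{L}(\mathbb{K}^X,\mathbb{K}^Y)$.

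It remains to verify $M_{\phi_M}=M$, which is a direct computation: for any $(y,x)\in Y\times X$,
\begin{equation}
M_{\phi_M}(y,x) = \langle\delta_y,\phi_M(\delta_x)\rangle = \phi_M(\delta_x)(y) = \sum_{z\in X}M(y,z)\,\delta_x(z) = M(y,x)\ .
\end{equation}
Combined with the injectivity established in Lemma~\ref{lemsecondepartie}, this shows that $\phi\mapsto M_\phi$ is a bijection between $\mathcal{L}(\mathbb{K}^X,\mathbb{K}^Y)$ and $\mathbb{K}^{Y\times(X)}$. There is no real obstacle here; the only substantive point is the reduction of continuity of $\phi_M$ to the continuity of its components $\pi_y\circ\phi_M$, which is why the hypothesis that $\mathbb{K}^Y$ carries the product topology (rather than some finer topology) is essential.
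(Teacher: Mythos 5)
Your proposal is correct and follows essentially the same route as the paper: you define the same map (the paper calls it $\psi_M$ and writes it as $\sum_{y}(\sum_x M(y,x)f(x))\delta_y$, which is your $\phi_M$), establish continuity by reducing to the components $f\mapsto\sum_x M(y,x)f(x)$ being finite sums of scalar multiples of projections, and verify $M_{\phi_M}=M$ by the same computation on Dirac masses. No substantive difference.
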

\begin{proof}
Let $M\in {\mathbb{K}}^{Y\times(X)}$. Let us define $\psi_M\colon {\mathbb{K}}^X\rightarrow {\mathbb{K}}^Y$ by $$\displaystyle\psi_M(f)=\psi_M(\sum_{x\in X}f(x)\delta_x)=
\sum_{y\in Y}\left ( \sum_{x\in X}M(y,x)f(x)\right )\delta_y\ .$$ (Clearly the second sum on $x\in X$ has only finitely many non zero terms since $M\in {\mathbb{K}}^{Y\times(X)}$, and therefore is defined in ${\mathbb{K}}$.) The map $\psi_M$ is linear. To see this, since $\psi_M(f)\in {\mathbb{K}}^X$, it is sufficient to prove that for every $\lambda\in {\mathbb{K}}$, $f,g\in {\mathbb{K}}^{X}$, and every $y \in Y$, 
$\psi_M(\lambda f+g)(y)=\lambda\psi_M(f)(y)+\psi_M(g)(y)$. This equality to prove is equivalent to the following: 
\begin{equation}
\begin{array}{llll}
&\psi_M(\lambda f+g)(y)&=&\lambda\psi_M(f)(y)+\psi_M(g)(y)\\
\Leftrightarrow & \langle \delta_y,\psi_M(\lambda f+g)\rangle&=&\lambda\langle \delta_y,\psi_M(f)\rangle+\langle\delta_y,\psi_M(g)\rangle\\
\Leftrightarrow & \langle \delta_y,\psi_M(\lambda f+g)\rangle&=&\langle\delta_y,\lambda \psi_M(f)+\psi_M(g)\rangle\\
\Leftrightarrow  & \displaystyle\sum_{x\in X}M(y,x)(\lambda f(x)+g(x))&=&\displaystyle\sum_{x\in X}(\lambda M(y,x)f(x)+M(y,x)g(x))\ .
\end{array}
\end{equation}
But the last equality is obvious (since the sums have only finitely many non zero terms). Let us prove that $\psi_M$ is continuous. It is sufficient to prove that for every $y \in Y$, $\ell_{M,y}\colon {\mathbb{K}}^X\rightarrow {\mathbb{K}}$, defined by 
$\displaystyle\ell_{M,y}(f)=\langle \delta_y,\psi_M(f)\rangle=\sum_{x\in X}M(y,x)f(x)$, is continuous. This is the case since 
$\ell_{M,y}$ is a finite sum of scalar multiples of projections. Therefore $\psi_M\in \mathcal{L}({\mathbb{K}}^X)$. Finally we prove that $M_{\psi_M}=M$. Let $(y,x)\in Y\times X$, we have 
\begin{equation}
\begin{array}{lll}
M_{\psi_M}(y,x)&=&\langle\delta_y,\psi_M(\delta_x)\rangle\\
&=&\displaystyle\langle\delta_y,\sum_{y^{\prime}\in Y}\left (\sum_{z\in X}M(y^{\prime},z)\delta_x(z)\right)\delta_{y^{\prime}}\rangle\\
&=&\displaystyle\sum_{z\in X}M(y,z)\delta_x(z)\\
&=&M(y,x)\ .
\end{array}
\end{equation}
The map $\phi\mapsto M_{\phi}$ is thus onto, and, by Lemma~\ref{lemsecondepartie}, it is a bijection.
\end{proof}
\begin{note}
\begin{enumerate}
\item If $X=Y$, then denoting the set $\mathcal{L}({\mathbb{K}}^X,{\mathbb{K}}^X)$ of all continuous endomorphisms by $\mathcal{L}({\mathbb{K}}^X)$, we find that $\mathcal{L}({\mathbb{K}}^X)$ and ${\mathbb{K}}^{X\times (X)}$ are equipotent.
\item If $X=Y=\naturals$, then is recovered the usual notion of row-finite matrices (see~\cite{Cooke}).
\item If $Y$ is reduced to a single element $x$, then $\mathcal{L}({\mathbb{K}}^X)=\mathcal{L}({\mathbb{K}}^X,{\mathbb{K}})\cong \mathcal{L}({\mathbb{K}}^X,{\mathbb{K}}^{\{x\}})\cong {\mathbb{K}}^{\{x\}\times (X)}\cong {\mathbb{K}}^{(X)}$, which is our Theorem~\ref{mainstatement}.  
\end{enumerate}
\end{note}

\section{Topological duality and completion}

In this section, we construct explicitly the canonical isomorphism between the topological duals of $\mathbb{K}^{(X)}$ and $\mathbb{K}^X$ using the fact that the former is the completion of the later.

Recall the following definition: let ${\mathbb{K}}$ be a field with an Hausdorff (field) topology, and $V$ be an Hausdorff topological ${\mathbb{K}}$-vector space. The \emph{completion} of $V$ is a pair $(\widehat{V},i)$ where $\widehat{V}$ is complete (Hausdorff) topological ${\mathbb{K}}$-vector space and $i\colon V\rightarrow \widehat{V}$ such that 
\begin{enumerate}
\item The map $i$ is an isomorphism of topological ${\mathbb{K}}$-vector space structures from $V$ into $\widehat{V}$, \emph{i.e.}, $i$ is both an (algebraic) isomorphism and a homeomorphism into $\widehat{V}$ (or in other terms, $i$ is a bicontinuous one-to-one $R$-linear map, or $i$ is a continuous one-to-one $R$-linear map and its inverse $i^{-1}\colon i(V)\rightarrow V$ is continuous where $i(V)$ has the subspace topology induced by $\widehat{V}$);
\item The image $i(V)$ is dense in $\widehat{V}$;
\item For any complete (Hausdorff) ${\mathbb{K}}$-vector space $W$ and any continuous and linear map $\phi\colon V \rightarrow W$, there exists one and only one map $\widehat{\phi}\colon \widehat{V}\rightarrow W$ such that $\widehat{\phi}\circ i=\phi$.   
\end{enumerate}

\begin{rem}
According to~\cite{BouTop}, $\mathbb{K}^X$ is complete and separated (with respect to the product topology) if, and only if, $\mathbb{K}$ is itself a complete Hausdorff field. In particular, the topological dual of $\mathbb{K}^X$ does not depend on whether or not $\mathbb{K}$ is a complete field.  
\end{rem}

Now let us assume that ${\mathbb{K}}$ has a the discrete topology (therefore ${\mathbb{K}}$ is a complete Hausdorff topological field). Clearly, ${\mathbb{K}}^X$ is the completion of ${\mathbb{K}}^{(X)}$ (the later being equipped with the initial topology with respect to the obvious projections which coincides with the subspace topology induced by ${\mathbb{K}}^X$ for its product topology). According to the definition of a completion, taking ${\mathbb{K}}$ in place of $W$, it is clear that there exists a canonical isomorphism $\Psi$ between the topological dual of ${\mathbb{K}}^{(X)}$ and ${\mathbb{K}}^X$. The isomorphism 
\begin{equation}
\begin{array}{llll}
\Psi\colon & ({\mathbb{K}}^{(X)})^{\prime}& \rightarrow & ({\mathbb{K}}^X)^{\prime}\\
& \ell & \mapsto & \widehat{\ell}
\end{array}
\end{equation} 
has inverse $\Psi^{-1}(\ell)=\ell\circ i=\ell|_{{\mathbb{K}}^{(X)}}$ for $\ell\in ({\mathbb{K}}^X)^{\prime}$. The isomorphism $\Psi$ may be given a precise definition. Let $\ell\in ({\mathbb{K}}^{(X)})^{\prime}$. Then we have
\begin{equation}
\begin{array}{llll}
\Psi(\ell)=\widehat{\ell}\colon & {\mathbb{K}}^X& \rightarrow & {\mathbb{K}}\\
& f & \mapsto & \displaystyle\sum_{x\in X}f(x)\ell(\delta_x)\ .
\end{array}
\end{equation}
Indeed since $f=\displaystyle\sum_{x\in X}f(x)\delta_x$ (sum of a summable family), we have 
\begin{equation}
\begin{array}{lll}
\Psi(\ell)(f)&=&\widehat{\ell}(f)\\
&=&\displaystyle \widehat{\ell}(\sum_{x\in X}f(x)\delta_x)\\
&=&\displaystyle\sum_{x\in X}f(x)\widehat{\ell}(\delta_x)\\
&&\mbox{(since $\widehat{\ell}$ is continuous)}\\
&=&\displaystyle\sum_{x\in X}f(x)\ell(\delta_x)\ .\\
&&\mbox{(since $\delta_x\in {\mathbb{K}}^{(X)}$)}
\end{array}
\end{equation}
According to previously introduced notations, $\Psi(\ell)(f)=\widehat{\ell}(f)=\langle p_{\Psi(\ell)},f\rangle$, where we recall that  $p_{\Psi(\ell)}\in{\mathbb{K}}^{(X)}$ is defined by $p_{\Psi(\ell)}(x)=\Psi(\ell)(\delta_x)=\widehat{\ell}(\delta_x)=\ell(\delta_x)$ (since $\delta_x\in {\mathbb{K}}^{(X)}$). Note that $p_{\Psi(\ell)}=\Phi^{-1}(\Psi(\ell))=\Phi^{-1}(\widehat{\ell})$. The map $\Phi^{-1}\circ \Psi\colon ({\mathbb{K}}^{(X)})^{\prime}\rightarrow {\mathbb{K}}^{(X)}$ is an isomorphism (composition of isomorphisms). The polynomial $\Phi^{-1}(\Psi(\ell))\in {\mathbb{K}}^{(X)}$ for $\ell\in ({\mathbb{K}}^{(X)})^{\prime}$ is thus given by $\Phi^{-1}(\Psi(\ell))(x)=\ell(\delta_x)$ for $x\in X$. We can check that $\ell(p)=\langle \Phi^{-1}(\Psi(\ell)),i(p)\rangle$ for any $\ell\in ({\mathbb{K}}^{(X)})^{\prime}$, and $p\in {\mathbb{K}}^{(X)}$. Indeed, $\ell(p)=\Psi(\ell)(i(p))=\langle \Phi^{-1}(\Psi(\ell)),i(p)\rangle$. 

\section{Weak topology}

Let $\mathbb{K}$ be a topological field, $(V,\tau)$ be a topological $\mathbb{K}$-vector space, and $V^{\prime}$ be its topological dual. We call \emph{$V^{\prime}$-weak topology} the weakest topology on $V$ such that the elements of $V^{\prime}$ are continuous. Let us denote by $\tau_w$ this topology. Since the elements of $V^{\prime}$ are continuous (for $\tau$), we have $\tau_{w}\subseteq \tau$. It can be shown that $(V,\tau_w)$ is also a $\mathbb{K}$-topological vector space, separated if $\mathbb{K}$ is so and $V^{\prime}$ separates the elements of $V$. The topological dual $V_w^{\prime}$ of $(V,\tau_w)$ is called \emph{weak dual} of $V$. 

\begin{cor}
Let $(\mathbb{K},\tau)$ be an Hausdorff topological field, $X$ be a set, and $\mathbb{K}^X$ endowed with the product topology, denoted by $\pi(X,\tau)$, of $|X|$ copies of $(\mathbb{K},\tau)$. The weak dual $(\mathbb{K}^X)^{\prime}_w$ of $(\mathbb{K}^X,\tau_w)$ is (isomorphic to) $\mathbb{K}^{(X)}$.  
\end{cor}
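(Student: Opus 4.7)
The statement is essentially a ``rigidity under passage to the weak topology'' phenomenon, and the proof will reduce to a tautology about weak topologies together with Theorem~\ref{mainstatement}. My plan is to show directly that the topological dual does not change when we replace the product topology $\pi(X,\tau)$ on $\mathbb{K}^X$ by the weaker topology $\tau_w$, and then invoke the main theorem.

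First I would record the two inclusions. On the one hand, by the very construction of $\tau_w$ as the coarsest topology making every element of $(\mathbb{K}^X)^{\prime}$ continuous, one has automatically $(\mathbb{K}^X)^{\prime}\subseteq (\mathbb{K}^X)^{\prime}_w$: every $\ell\in(\mathbb{K}^X)^{\prime}$ is $\tau_w$-continuous by definition, and it is of course still linear. On the other hand, since $\tau_w\subseteq\pi(X,\tau)$, any $\tau_w$-continuous linear form on $\mathbb{K}^X$ is a fortiori $\pi(X,\tau)$-continuous, giving the reverse inclusion $(\mathbb{K}^X)^{\prime}_w\subseteq(\mathbb{K}^X)^{\prime}$. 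Hence $(\mathbb{K}^X)^{\prime}_w=(\mathbb{K}^X)^{\prime}$ as $\mathbb{K}$-vector spaces.

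Second, an application of Theorem~\ref{mainstatement} (with the Hausdorff topological field $(\mathbb{K},\tau)$ and the product topology $\pi(X,\tau)$ on $\mathbb{K}^X$) yields $(\mathbb{K}^X)^{\prime}\cong \mathbb{K}^{(X)}$ via the map $\Phi$. Composing with the identification obtained in the previous paragraph gives the announced isomorphism $(\mathbb{K}^X)^{\prime}_w\cong \mathbb{K}^{(X)}$.

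There is essentially no obstacle: the argument only uses the generic fact that any topology lying between the weak topology and the original topology produces the same continuous dual. One may, as a side remark, verify that $\tau_w$ is Hausdorff (so that calling $(\mathbb{K}^X, \tau_w)^{\prime}$ a ``weak dual'' in the sense of the preceding paragraph is legitimate) by noting that $\mathbb{K}^{(X)}$, viewed inside the algebraic dual through $\Phi$, separates points of $\mathbb{K}^X$ (this is property~2 of the non-degeneracy of the dual pairing recalled in Section~2), and that $\mathbb{K}$ itself is assumed Hausdorff; but this is not needed for the bare statement of the corollary, which concerns only the vector space structure of the weak dual.
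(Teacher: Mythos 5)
Your proof is correct, but it takes a genuinely different route from the paper's. You use the general tautology that for \emph{any} topological vector space $(V,\tau)$ the weak dual coincides with the original dual: the definition of $\tau_w$ gives $V'\subseteq V'_w$, and $\tau_w\subseteq\tau$ gives $V'_w\subseteq V'$ (a $\tau_w$-continuous map is a fortiori $\tau$-continuous since $\tau_w$ has fewer open sets); then one application of Theorem~\ref{mainstatement} finishes. The paper instead proves the strictly stronger statement that the two \emph{topologies} coincide, $\tau_w=\pi(X,\tau)$: it uses the concrete description of the dual as $\mathbb{K}^{(X)}$ to observe that the projections $\langle\delta_x,\cdot\rangle$ are among the functionals defining $\tau_w$ (hence $\pi(X,\tau)\subseteq\tau_w$), and that every $\langle p,\cdot\rangle$ with $p\in\mathbb{K}^{(X)}$ is a finite sum of scalar multiples of projections (hence $\tau_w\subseteq\pi(X,\tau)$), and only then deduces the equality of duals. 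Your argument is shorter, requires nothing specific about $\mathbb{K}^X$, and cleanly isolates the point that any topology between $\tau_w$ and $\tau$ has the same continuous dual; the paper's argument buys the additional (and situation-specific) fact that the weak topology of $\mathbb{K}^X$ \emph{is} the product topology, which is of independent interest but not needed for the bare statement. Your side remark on the Hausdorff property of $\tau_w$ via non-degeneracy of the pairing is also correct and, as you note, dispensable.
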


\begin{proof}
An element of $\mathbb{K}^{(X)}$ is obviously a continuous linear form (\emph{via} the isomorphism between $(\mathbb{K}^X)^{\prime}$ and $\mathbb{K}^{(X)}$ of Theorem~\ref{mainstatement}) by definition of the $\mathbb{K}^{(X)}$-weak topology. Therefore, $\mathbb{K}^{(X)}\subseteq (\mathbb{K}^{X})^{\prime}_w$. Let us prove that the product and the weak topologies are actually equal. The weak topology $\tau_w$ is, by definition, the weakest topology for which every linear form $\langle p,\cdot\rangle\colon\mathbb{K}^X\rightarrow (\mathbb{K},\tau)$ is continuous ($p\in\mathbb{K}^{(X)}$). In particular, the projections $\langle\delta_x,\cdot\rangle$ are also continuous. Therefore $\tau_w$ is stronger than $\pi(X,\tau)$ (since the later is the weakest topology with this property). So $\pi(X,\tau)\subseteq \tau_w$. Let us prove the reciprocal inclusion. To do so, it is sufficient to prove that the identity map $\mathsf{id}\colon (\mathbb{K}^X,\pi(X,\tau))\rightarrow(\mathbb{K}^X,\tau_w)$ is continuous, which is equivalent (according to usual properties of initial topology) to the fact that  for every $p\in \mathbb{K}^{(X)}$, the map 
\begin{equation}
\begin{array}{llll}
\langle p,\cdot\rangle \colon&(\mathbb{K}^X,\pi(X,\tau))&\rightarrow &(\mathbb{K},\tau)\\
& f & \mapsto & \langle p,f\rangle 
\end{array}
\end{equation}
is continuous, which is obviously the case since these maps are sum of a finite number of (scalar multiples of) projections. Therefore a linear form $\ell\colon\mathbb{K}^X\rightarrow(\mathbb{K},\tau)$ is continuous with respect to the weak topology if, and only if, it is continuous with respect to the product topology, and therefore is an element of $\mathbb{K}^{(X)}$ (according to Theorem~\ref{mainstatement}).
\end{proof}


\bibliographystyle{abbrvnat}
\bibliography{poinsot-bib}

@book{BouTop,
title={Elements of mathematics - General topology},
author={Nicolas Bourbaki},
edition={2nd},
publisher={Springer},
year="2007"
}

@book{BouAlg,
title={Elements of mathematics - Algebra},
author={Nicolas Bourbaki},
edition={2nd},
publisher={Springer},
year="2007"
}

@book{Clifford,
title={The algebraic theory of semigroups, volume I},
author={Alfred H. Clifford, and G B. Preston},
series={Mathematical Survey},
number={7},
year={1961},
editor={American Mathematical Society}
}

@book{Cooke,
title={Infinite matrices and sequence spaces},
author={Richard G. Cooke},
publisher={Dover Publications, inc.},
year="1955"
}

@article{Dieudonne,
title={Linearly compact vector spaces and double vector spaces over sfields},
author={Jean Dieudonn\'e},
pages={13-19},
volume={73},
journal={Amer. J. Math.},
year="1951"
}

@book{Eisenbud,
title={Commutative algebra with a view toward algebraic geometry},
author={David Eisenbud},
publisher={Springer},
series={Graduate texts in mathematics},
volume={150},
year="1995"
}

@article{Kiltinen,
title={On the number of field topologies on an infinite field},
author={John O. Kiltinen},
journal={Proceedings of the American Mathematical Society},
volume={40},
number={1},
year="1973",
pages={30-36}
}

@book{Kothe,
title={Topological vector spaces I},
author={Gottfried K\"othe},
publisher={Springer-Verlag},
series={Die Grundlehren der mathematischen Wissenschaften},
volume={159},
year="1966"
}

@book{Lefschetz,
title={Algebraic topology},
author={Solomon Lefschetz},
series={Amerc. Math. Soc. Colloq. Pub.},
volume={27},
year="1942"
}

@article{Podewski,
title={The number of field topologies on countable fields},
author={Klaus-Peter Podewski},
volume={39},
number={1},
journal={Proceedings of the Amercian Mathematical Society},
pages={33-38},
year="1973"
}

@article{Poinsot,
title={M\"obius inversion formula for monoids with zero},
author={Laurent Poinsot, and G\'erard H. E. Duchamp, and Christophe Tollu},
volume={81},
number={3},
year={2010},
journal={Semigroup Forum},
pages={446-460}
}

@book{Treves,
title={Topological vector spaces, distributions and kernels},
author={François Tr\`eves},
volume={25},
series={Pure and applied mathematics},
editor={Academic Press}, 
year="1967"
}

@book{Warner,
title={Topological rings},
author={Seth Warner},
publisher={Elsevier},
series={North-Holland mathematics studies},
volume=178,
year="1993"
}
\label{sec:biblio}

\end{document}